\newcommand\bigzero{\makebox(0,0){{\scaleobj{2.1}{0}}}}
\newcommand\Zero{\makebox(0,-3){{\scaleobj{1.5}{0}}}}
\newcommand\Zerob{\makebox(6,15){{\scaleobj{1.5}{0}}}}
\newcommand*{\bord}{\multicolumn{1}{c|}{}}
\newcommand\reallywidehat[1]{%
\savestack{\tmpbox}{\stretchto{%
  \scaleto{%
    \scalerel*[\widthof{\ensuremath{#1}}]{\kern-.6pt\bigwedge\kern-.6pt}%
    {\rule[-\textheight/2]{1ex}{\textheight}}
  }{\textheight}%
}{0.5ex}}%
\stackon[1pt]{#1}{\tmpbox}%
}
\newcommand{\calA}{\mathcal{A}}
\newcommand{\calL}{\mathcal{L}}
\newcommand{\calN}{\mathcal{N}}
\newcommand{\mC}{\mathbb{C}}
\newcommand{\mN}{\mathbb{N}}
\newcommand{\bbv}{\mathbf{v}}
\newcommand{\balpha}{\bm{\alpha}}
\newcommand{\bbeta}{\bm{\beta}}
\newcommand{\bgamma}{\bm{\gamma}}
\newcommand{\bxi}{\bm{\xi}}
\newtheorem{theorem}{Theorem}[section]
\newtheorem{corollary}[theorem]{Corollary}
\newtheorem{proposition}[theorem]{Proposition}
\theoremstyle{definition}
\theoremstyle{definition}
\theoremstyle{definition}
\theoremstyle{definition}
\begin{document}

\keywords{
Sylvester equation, Banach algebra, Gelfand transform, Roth's removal rule}

\subjclass[2010]{Primary 15A24; Secondary 46H99}

 \title[]{$\textrm{THE SYLVESTER EQUATION IN BANACH ALGEBRAS}$}
 
\vspace{-0.3cm}
 
 \author[]{Amol Sasane}
\address{Department of Mathematics \\London School of Economics\\
     Houghton Street\\ London WC2A 2AE\\ United Kingdom}
\email{A.J.Sasane@lse.ac.uk}
 
 \maketitle
 
  \vspace{-0.72cm}
 
 \begin{abstract}
 Let $\calA$ be a unital complex semisimple Banach algebra, and $M_{\calA}$ denote its maximal ideal space. For a matrix $M\in \calA^{n\times n}$,  $\widehat{M}$ denotes the matrix obtained by taking entry-wise Gelfand transforms. For a matrix $M\in \mC^{n\times n}$, 
 $\sigma(M)\subset \mC$ denotes the set of eigenvalues of $M$. 
 It is shown that if $A\in \calA^{n\times n}$ and $B\in \calA^{m\times m}$ are such that for all $\varphi \in M_{\calA}$, 
 $\sigma(\widehat{A}(\varphi))\cap \sigma(\widehat{B}(\varphi))=\emptyset$, then for all 
 $C\in \calA^{n\times m}$, the Sylvester equation $AX-XB=C$ has a unique solution $X\in \calA^{n\times m}$. As an application,  Roth's removal rule is proved in the context of matrices over a  Banach algebra.
 \end{abstract}

 \section{Introduction}
 
 \noindent The following result due to Sylvester is classical (see \cite{Syl}, \cite{BR}). Here for an $M\in \mC^{n\times n}$, $\sigma(M):=\{\lambda\in \mC: \lambda \textrm{ is an eigenvalue of }M\}$. 
 
 \begin{proposition}
 \label{t_0.1} 
 Let $A\in \mC^{n\times n}$ and $B\in \mC^{m\times m}$.
 For any $C\in \mC^{n\times m}$ the Sylvester equation 
  $
 AX-XB=C
 $  has a unique solution $X\in \mC^{n\times m}$ if and only if 
  $
 \sigma(A)\cap \sigma(B)=\emptyset.
 $  
 \end{proposition}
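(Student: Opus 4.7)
I would set up the Sylvester operator $\calS : \mC^{n\times m} \to \mC^{n\times m}$ defined by $\calS(X) := AX - XB$. The proposition asserts precisely that $\calS$ is invertible if and only if $\sigma(A)\cap\sigma(B)=\emptyset$. Because $\calS$ is a linear endomorphism of the finite-dimensional space $\mC^{n\times m}$, invertibility is equivalent to injectivity, and the whole problem therefore reduces to a spectral analysis of $\calS$.

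\emph{Necessity.} If $\lambda\in\sigma(A)\cap\sigma(B)$, I would produce an explicit nonzero kernel element as follows: pick a right eigenvector $v\in\mC^n$ of $A$ with $Av=\lambda v$ and a left eigenvector $w\in\mC^m$ of $B$ with $w^{\top}B=\lambda w^{\top}$ (the latter exists because $\sigma(B^{\top})=\sigma(B)$). Then $X_0:=vw^{\top}\neq 0$ satisfies
\[
AX_0 - X_0 B \;=\; (Av)w^{\top} - v(w^{\top}B) \;=\; \lambda vw^{\top} - \lambda vw^{\top} \;=\; 0,
\]
so $\calS$ is not injective.

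\emph{Sufficiency.} Here I would vectorize. Using the standard identity $\operatorname{vec}(PXQ)=(Q^{\top}\otimes P)\operatorname{vec}(X)$, the equation $AX-XB=C$ is equivalent to
\[
\bigl(I_m\otimes A - B^{\top}\otimes I_n\bigr)\operatorname{vec}(X) \;=\; \operatorname{vec}(C).
\]
It thus suffices to show that $M := I_m\otimes A - B^{\top}\otimes I_n\in\mC^{nm\times nm}$ is invertible. The two summands commute. Schur-triangularize $A=U_A T_A U_A^{*}$ with $T_A$ upper triangular having diagonal entries $\lambda_1,\dots,\lambda_n$ (the eigenvalues of $A$), and similarly $B^{\top}=U_B T_B U_B^{*}$ with diagonal entries $\mu_1,\dots,\mu_m$. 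Conjugating $M$ by the unitary $U_B\otimes U_A$ transforms it into $I_m\otimes T_A - T_B\otimes I_n$, which is upper triangular with diagonal entries exactly the $nm$ differences $\lambda_i-\mu_j$. Hence $\det M = \prod_{i,j}(\lambda_i-\mu_j)\neq 0$ under the disjointness hypothesis, so $M$, and consequently $\calS$, is a bijection.

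The main obstacle is the book-keeping in this last step: one must verify that $I_m\otimes T_A - T_B\otimes I_n$ really is upper triangular with the claimed diagonal. This boils down to the tensor-product identity $(P\otimes Q)(R\otimes S)=(PR)\otimes(QS)$ together with the observation that a Kronecker product of upper-triangular matrices is upper triangular in the natural lexicographic ordering of basis vectors, after which a direct inspection of the diagonal blocks produces the required list of differences.
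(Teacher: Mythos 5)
Your proposal is correct, and it proves both implications of the stated equivalence, whereas the paper's own proof establishes only the direction actually needed later (disjoint spectra $\Rightarrow$ unique solvability). The routes are genuinely different. The paper argues injectivity of $L(X)=AX-XB$ purely algebraically: from $AX=XB$ one gets $p(A)X=Xp(B)$ for every polynomial $p$; since disjoint spectra make the characteristic polynomials $p_A,p_B$ coprime, a B\'ezout identity $q\,p_A+\widetilde{q}\,p_B=1$ together with Cayley--Hamilton forces $X=0$. You instead vectorize, identify the Sylvester operator with $M=I_m\otimes A-B^{\top}\otimes I_n$, and Schur-triangularize to read off $\det M=\prod_{i,j}(\lambda_i-\mu_j)$. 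Your computation is right (conjugation by $U_B\otimes U_A$ does yield the upper-triangular $I_m\otimes T_A-T_B\otimes I_n$ with diagonal $\lambda_i-\mu_j$), and it buys more: the full spectrum of the Sylvester operator, hence the equivalence in a single stroke --- the determinant is nonzero if and only if the spectra are disjoint, so your separate rank-one kernel element $vw^{\top}$ for the necessity direction, while a nice explicit witness, is not even needed. What the paper's argument buys in exchange is independence from the Schur form and from any choice of basis, and it sits closer to the operator-theoretic generalizations (Rosenblum's theorem) that motivate the article; the continuity of $(A,B,C)\mapsto X$ that the paper extracts afterwards comes equally easily from either proof, since both reduce to inverting a matrix depending continuously on $(A,B)$.
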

 
 \noindent The aim in this article is to prove an appropriate generalisation of this result 
 when $\mC$ is replaced by a commutative unital complex semisimple Banach algebra $\calA$. 
 For background on the Gelfand transform and spectral theory of Banach algebras, we refer the interested reader to e.g. \cite{Mul}, \cite[Chap. I]{Gam} or \cite[Part III, Chap. 11]{Rud}. 
 Let $M_{\calA}$ denote the maximal ideal space of $\calA$, consisting of all complex homomorphisms $\varphi:\calA\rightarrow \mC$. The dual space $\calL(\calA,\mC)$ of $\calA$ is equipped with the weak-$\ast$ topology, and $M_\calA\subset \calL(\calA,\mC)$ is given  the subspace topology induced from $\calL(\calA,\mC)$. Then $M_\calA$ is a compact Hausdorff topological space. The space of all complex-valued continuous functions on $M_\calA$ will be denoted by $C(M_\calA)$. For $x\in \calA$, the Gelfand transform of $x$, namely the map $M_\calA\owns \varphi\mapsto \varphi(x)$, will be denoted by $\widehat{x}\in C(M_\calA)$.  Let $\calA^{n\times m}$  denote the set of all $n\times m$ matrices with entries from $\calA$. For a matrix $X=[x_{ij}]\in \calA^{n\times m}$, we denote by 
 $\widehat{X}=[\widehat{x}_{ij}]\in C(M_{\calA})^{n\times m}$ the matrix of Gelfand transforms $\widehat{x}_{ij}$ 
 of the entries $x_{ij}$ of $X$. Our main result is the following. 
 
 \begin{theorem}
 \label{c_0.1}
Let $\calA$ be a commutative unital complex semisimple Banach algebra. 
Let $A\in \calA^{n\times n}$ and $B\in \calA^{m\times m}$be such that 
$$
\forall \varphi \in M_{\calA}, \; \sigma(\widehat{A}(\varphi))\cap \sigma(\widehat{B}(\varphi))=\emptyset.
$$ 
 Then for every $C\in \calA^{n\times m},$  there exists a unique $X\in \calA^{n\times m}$ such that $AX-XB=C$. 
 \end{theorem}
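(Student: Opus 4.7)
The plan is to reinterpret unique solvability as invertibility of a single matrix over $\calA$, and then to reduce that invertibility fibre-by-fibre to the classical case via Gelfand theory. First I would vectorise: stacking columns identifies $\calA^{n\times m}$ with the free $\calA$-module $\calA^{nm}$, and the $\calA$-linear map $L_{A,B}(X):=AX-XB$ is then represented by the matrix
$$
T \;:=\; I_m\otimes A \;-\; B^\top\otimes I_n \;\in\; \calA^{nm\times nm},
$$
via the purely formal identity $\mathrm{vec}(PXQ)=(Q^\top\otimes P)\,\mathrm{vec}(X)$, which is valid over any commutative ring. Thus the conclusion of the theorem is equivalent to the invertibility of $T$ in $\calA^{nm\times nm}$.

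The crux is the following invertibility criterion, which I would prove in three short steps: for any commutative unital Banach algebra $\calA$ and any $N$, a matrix $T\in\calA^{N\times N}$ is invertible if and only if $\widehat{T}(\varphi)\in\mC^{N\times N}$ is invertible for every $\varphi\in M_\calA$. Since $\calA$ is commutative, Cramer's rule gives that $T$ is invertible iff $\det T$ is a unit of $\calA$. The standard Gelfand characterisation of units in a commutative unital Banach algebra then gives that $\det T$ is a unit iff $\widehat{\det T}(\varphi)\ne 0$ for every $\varphi\in M_\calA$. Finally, because each $\varphi$ is a unital algebra homomorphism into $\mC$, one has $\widehat{\det T}(\varphi)=\det\widehat{T}(\varphi)$, closing the loop.

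With the criterion in hand, the theorem follows immediately: for each $\varphi\in M_\calA$,
$$
\widehat{T}(\varphi) \;=\; I_m\otimes \widehat{A}(\varphi) \;-\; \widehat{B}(\varphi)^\top\otimes I_n
$$
represents the classical Sylvester operator $Y\mapsto \widehat{A}(\varphi)Y-Y\widehat{B}(\varphi)$ on $\mC^{n\times m}$. By the hypothesis $\sigma(\widehat{A}(\varphi))\cap\sigma(\widehat{B}(\varphi))=\emptyset$ and Proposition~\ref{t_0.1}, this operator, and hence $\widehat{T}(\varphi)$, is invertible. The criterion then yields invertibility of $T$ in $\calA^{nm\times nm}$, so $L_{A,B}$ is a bijection on $\calA^{n\times m}$, giving the desired unique solvability. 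The only step requiring real care is the invertibility criterion; a heavier alternative would be to apply a multivariable holomorphic functional calculus to the commuting operators ``left multiplication by $A$'' and ``right multiplication by $B$'' on the Banach space $\calA^{n\times m}$, but the determinant--Cramer route is shorter and entirely elementary.
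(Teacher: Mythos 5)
Your proof is correct, and it takes a genuinely different --- and more elementary --- route than the paper's. The paper constructs the pointwise solution $F(\varphi)$ of $\widehat{A}(\varphi)F(\varphi)-F(\varphi)\widehat{B}(\varphi)=\widehat{C}(\varphi)$ from Proposition~\ref{t_0.1}, proves that $F$ is continuous on $M_\calA$ using the continuous dependence of the solution on the data (Corollary~\ref{corollary_15_july_2021_12:41}), and then lifts $F$ to an element of $\calA^{n\times m}$ via Hayashi's Implicit Function Theorem for Banach algebras (Proposition~\ref{prop3}), verifying the rank condition by the very same invertibility of the scalar Sylvester operator that you invoke; uniqueness is handled by a separate semisimplicity argument. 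Your route replaces all of this with the observation that unique solvability is exactly invertibility of the single matrix $T=I_m\otimes A-B^{\top}\otimes I_n$ over $\calA$, which by the adjugate identity $T\,\mathrm{adj}(T)=(\det T)I_{nm}$ and the Gelfand characterisation of units reduces to the nonvanishing of $\det\widehat{T}(\varphi)$ for every $\varphi\in M_\calA$ --- precisely the fibrewise invertibility supplied by Proposition~\ref{t_0.1}. Each step you flag (the vec identity over a commutative ring, Cramer's rule, $\varphi(\det T)=\det\widehat{T}(\varphi)$ because $\varphi$ is a unital homomorphism) is sound. What your approach buys: it is shorter, needs neither the continuity lemma nor the Implicit Function Theorem, delivers existence and uniqueness in one stroke, and does not actually use semisimplicity (the unit criterion via $M_\calA$ holds in any commutative unital Banach algebra), so it proves a marginally stronger statement. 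What the paper's approach buys is a lifting template that would still apply to problems not linear in the unknown, where no determinant reduction is available.
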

 
 \subsection{Relation to previous and recent work}
 \label{subsection_known_results}
 
 The Sylvester equation $ax-xb=c$ has been studied in arbitrary Banach algebras in \cite{Ros}. However, our result is not a consequence of this result, because $A$ and $B$ are not of the same dimensions. Moreover, the result in \cite{Ros} gives a solvability condition in 
 terms of the Dunford-Taylor operational calculus, while we give a pointwise criterion using Gelfand transforms.
 We also mention some more recent papers on the topic of Sylvester operator equations. 
 It was pointed out by the reviewer that our proof of (the classical, known) Proposition~\ref{t_0.1} contains the same expressions as those derived in \cite{Dra}. Papers \cite{Djo} and \cite{Djo2} study the case when the Sylvester operator $S: X\mapsto AX-XB$  is not invertible, but the initial equation is still solvable (with infinitely many solutions). In particular,  \cite{Djo2} covers the case when $A$, $B$ and $C$ are scalar matrices while \cite{Djo} covers the case when $A$, $B$ and $C$ are bounded linear operators in Banach spaces. The results in \cite{Djo} are also obtained via the Gelfand transform and spectral theory for commutative unital Banach algebras.
 
 \smallskip 
 
 \noindent  The outline  of the article is as follows. 

\noindent $\bullet$  We collect some preliminaries in Section~\ref{section_preliminaries}. In particular, we repeat  
\phantom{$\bullet$ }the proof of the classical result because its proof will reveal that the \phantom{$\bullet$ }solution $X$ depends continuously on the data $A,B,C$, a fact which \phantom{$\bullet$ }we will need to prove our Theorem~\ref{c_0.1}. 
 We will also recall the Implicit 
 \phantom{$\bullet$ }Function Theorem in Banach algebras, which will be our main tool. 
 
\noindent $\bullet$ Subsequently, in Section~\ref{section_proof}, we will 
 give the proof of Theorem~\ref{c_0.1}. 
 
\noindent $\bullet$ Finally, in Section~\ref{section_application}, we give an application of our main result to prove  \phantom{$\bullet$ }an analogue of the Roth removal rule (a criterion for the similarity  \phantom{$\bullet$ }of a block diagonal matrix and a block upper triangular matrix) in  \phantom{$\bullet$ }the context of matrices over a commutative Banach algebra. 

\smallskip 

\noindent {\bf Acknowledgement:} I am grateful to the anonymous referee for the careful review, and for useful comments. In particular, for drawing my attention to  some of the references to recent results on the operator Sylvester equation that are now cited in Subsection~\ref{subsection_known_results}.
 
 \goodbreak 
 
 \section{Preliminaries}
 \label{section_preliminaries}
 
 \noindent We use the notation $I_n$ for the $n\times n$ identity matrix. 
 
 \subsection{Proof when $\calA=\mC$}
 
  \begin{proposition}
 \label{t_0.1}
 Let $A\in \mC^{n\times n}$ and $B\in \mC^{m\times m}$ be such that 
 $$
 \sigma(A)\cap \sigma(B)=\emptyset.
 $$ 
 Then for every $C\in \mC^{n\times m}$ there exists a unique $X\in \mC^{n\times m}$ such that $AX-XB=C$.
 \end{proposition}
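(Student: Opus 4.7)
The plan is to prove that the linear Sylvester operator $S\colon X\mapsto AX-XB$ on $\mC^{n\times m}$ is injective; since its domain and codomain are finite-dimensional vector spaces of the same dimension $nm$, this immediately upgrades to bijectivity, giving existence and uniqueness of $X$ for every $C$. Moreover $S$ depends linearly, hence continuously, on $(A,B)$, so $S^{-1}$ depends continuously on $(A,B)$ throughout the open set of pairs with $\sigma(A)\cap\sigma(B)=\emptyset$, and therefore $X=S^{-1}(C)$ depends continuously on $(A,B,C)$ --- precisely the continuous dependence that Theorem~\ref{c_0.1} will need downstream.

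For injectivity, I would argue as follows. Suppose $AX=XB$. A one-line induction then yields $A^kX=XB^k$ for every $k\geq 0$, and hence $q(A)X=Xq(B)$ for every polynomial $q\in\mC[\lambda]$. Taking $q=p_A$, the characteristic polynomial of $A$, the Cayley--Hamilton theorem gives $p_A(A)=0$, and therefore $X\,p_A(B)=0$. By the spectral mapping theorem, $\sigma(p_A(B))=\{p_A(\mu):\mu\in\sigma(B)\}$; since $p_A$ vanishes exactly on $\sigma(A)$ and $\sigma(A)\cap\sigma(B)=\emptyset$, every such eigenvalue is non-zero, so $p_A(B)$ is invertible and $X=0$, as required.

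To make the continuous dependence quantitative (and to reproduce the expressions referenced in \cite{Dra}), the same calculation applied to the inhomogeneous equation is illuminating: inducting from $AX-XB=C$ gives
$$A^kX-XB^k \;=\; \sum_{j=0}^{k-1}A^{k-1-j}\,C\,B^j \qquad(k\geq 1),$$
and summing against the coefficients $a_k$ of $p_A(\lambda)=\sum_{k=0}^n a_k\lambda^k$ produces the closed form
$$X \;=\; -\Big(\sum_{k=1}^n a_k\sum_{j=0}^{k-1}A^{k-1-j}\,C\,B^j\Big)\,p_A(B)^{-1},$$
in which $X$ appears as a rational, and manifestly continuous, function of the entries of $A$, $B$, $C$ on the open set where $p_A(B)$ is invertible. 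I do not foresee any serious obstacle: the argument is entirely algebraic, and the only slightly delicate point is the appeal to the spectral mapping theorem for the polynomial $p_A$ applied to the matrix $B$, which in the finite-dimensional setting is elementary. The reason I favour writing the proof this way, rather than via a Dunford--Taylor contour integral, is that the closed form above will transplant most smoothly to the Banach-algebra setting through a pointwise/Gelfand-transform argument.
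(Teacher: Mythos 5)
Your proof is correct, and its skeleton coincides with the paper's: both reduce the problem to injectivity of the Sylvester operator on the finite-dimensional space $\mC^{n\times m}$, derive $A^kX=XB^k$ (hence $q(A)X=Xq(B)$ for every polynomial $q$) by induction, and invoke Cayley--Hamilton. You diverge only at the final step. The paper notes that $\sigma(A)\cap\sigma(B)=\emptyset$ makes the characteristic polynomials $p_A,p_B$ coprime, writes a Bezout identity $q\,p_A+\widetilde{q}\,p_B=1$, and applies Cayley--Hamilton to both $A$ and $B$ to squeeze out $X=0$. You instead apply Cayley--Hamilton only to $A$, obtain $X\,p_A(B)=0$, and conclude via the spectral mapping theorem that $p_A(B)$ is invertible because $p_A$ has no root in $\sigma(B)$. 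Both arguments are sound and equally elementary; yours is slightly more economical and has the bonus of yielding the explicit closed form $X=-\bigl(\sum_{k=1}^{n}a_k\sum_{j=0}^{k-1}A^{k-1-j}CB^{j}\bigr)p_A(B)^{-1}$ (the expression the paper attributes to \cite{Dra}), from which the continuous --- indeed rational --- dependence of $X$ on $(A,B,C)$ is immediate. The paper obtains that continuity separately in Corollary~\ref{corollary_15_july_2021_12:41}, via continuity of eigenvalues and of inversion on $GL_{nm}(\mC)$; your formula would let you streamline that corollary as well, which is exactly the continuity statement needed in the proof of Theorem~\ref{c_0.1}.
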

 \begin{proof} Let $L:\mC^{n\times m}\rightarrow \mC^{n\times m}$  be the linear transformation given by  $L(X)=AX-XB$ for all $X\in \mC^{n\times m}$. We want to show that $L$ is invertible. It is enough to show it is injective. 
 Let $X\in \ker L$, that is, $AX-XB=0$. Then $AX=XB$. It follows by induction that for all $k\geq 0$, $A^kX=XB^k$ (since if true for some $k$, then we have $A^{k+1}X=A(A^kX)=A(XB^k)=(AX)B^k=(XB) B^k=XB^{k+1}$). Suppose $p_A, p_B\in C[z]$ are the characteristic polynomials of $A,B$. 
 As $\sigma(A)\cap \sigma(B)=\emptyset$, it follows that $p_A,p_B$ are coprime. 
 So there exist polynomials $q,\widetilde{q}\in \mC[z]$ such that $q \;\!p_A +\widetilde{q} \;\!p_B=1$. 
 By the Cayley-Hamilton theorem, $p_A(A)=0$ and $p_B(B)=0$. We have 
 \begin{eqnarray*}
 0\!\!\!\!&=&\!\!\!\! 0\;\!X=q(A)\;\! 0 \;\!X=q(A) \;\!p_A(A) \;\!X=(I_n-\widetilde{q}(A) \;\!p_B(A))X
 \\
 \!\!\!\!&=&\!\!\!\!X-\widetilde{q}(A)\;\! p_B(A)\;\!X=X-\widetilde{q}(A) \;\!X\;\!p_B(B)
 \\
 \!\!\!\!&=&\!\!\!\!X-\widetilde{q}(A)\;\! X \;\!0 
 =X-0=X.
 \end{eqnarray*}
 So $L$ is injective, and hence invertible.  
 \end{proof}
 
 \noindent We endow $\mC^{n\times n}$ with the operator norm topology induced by equipping $\mC^{n}$ with the topology given by the Euclidean $2$-norm $\|\cdot\|_2$:
 $$
 \|\bbv\|_2:=\sqrt{v_1^2+\cdots+v_n^2}\;\; \textrm{ for }\;\bbv\!=\!\left[\!\!\begin{array}{c}v_1\\ \vdots \\ v_n\end{array}\!\!\right]\in \mC^{n}.
 $$
 Thus if $M\in \mC^{n\times n}$, then 
 $\displaystyle 
 \|M\|=\sup_{\mathbf{0}\neq \bbv\in \mC^n}\frac{\|M\bbv\|_2}{\|\bbv\|_2}.
 $ 
 
 \begin{corollary} 
 \label{corollary_15_july_2021_12:41}
 Let $A_0\in \mC^{n\times n}$ and $B_0\in \mC^{m\times m}$ be such that 
 $$
 \sigma(A_0)\cap \sigma(B_0)=\emptyset.
 $$ 
 Then there exist neighbourhoods $\calN_{A_0},\calN_{B_0}$ of $A_0,$ respectively $B_0,$  such that for all 
 $(A,B)\in \calN_{A_0}\times \calN_{B_0}$, we have 
 $$
 \sigma(A)\cap \sigma(B)=\emptyset.
 $$ 
 For $(A,B)\in \calN_{A_0}\times \calN_{B_0}$ and $C\in \mC^{n\times m},$ let $X(A,B,C)$ denote the unique solution $X\in \mC^{n\times m}$ to 
 $AX-XB=C$. Then the map 
 $$
\calN_{A_0}\times \calN_{B_0}\times \mC^{n\times n} \owns (A,B,C)\mapsto X(A,B,C)\in \mC^{n\times n}
 $$ is continuous. 
 \end{corollary}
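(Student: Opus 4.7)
The proof splits naturally into two parts. For the first claim (that disjoint spectra is an open condition), I would appeal to the upper semicontinuity of the spectrum: since $\sigma(A_0)$ and $\sigma(B_0)$ are disjoint finite subsets of $\mC$, I can choose disjoint open sets $U_A,U_B\subset \mC$ with $\sigma(A_0)\subset U_A$ and $\sigma(B_0)\subset U_B$. The eigenvalues of a matrix are the zeros of its characteristic polynomial, whose coefficients are polynomial (hence continuous) functions of the matrix entries; by the standard continuity-of-roots theorem, there exist neighbourhoods $\calN_{A_0}$ of $A_0$ and $\calN_{B_0}$ of $B_0$ such that $\sigma(A)\subset U_A$ for $A\in \calN_{A_0}$ and $\sigma(B)\subset U_B$ for $B\in \calN_{B_0}$. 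The disjointness $U_A\cap U_B=\emptyset$ then forces $\sigma(A)\cap \sigma(B)=\emptyset$ throughout the product neighbourhood.

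For the second part, I would recast the Sylvester equation in matrix-vector form. Let $\mathrm{vec}:\mC^{n\times m}\rightarrow \mC^{nm}$ denote the usual column-stacking operator. A direct computation shows that $AX-XB=C$ is equivalent to
\begin{equation*}
\bigl((I_m\otimes A)-(B^{\top}\otimes I_n)\bigr)\,\mathrm{vec}(X)\;=\;\mathrm{vec}(C),
\end{equation*}
so that the Sylvester operator $L_{A,B}:X\mapsto AX-XB$ corresponds to the $nm\times nm$ matrix $S(A,B):=I_m\otimes A-B^{\top}\otimes I_n$. The entries of $S(A,B)$ are polynomial in the entries of $A$ and $B$, so the map $(A,B)\mapsto S(A,B)$ is continuous. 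By Proposition~\ref{t_0.1}, $S(A,B)$ is invertible on $\calN_{A_0}\times \calN_{B_0}$.

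Since matrix inversion $M\mapsto M^{-1}$ is continuous on the open set of invertible matrices (each entry of $M^{-1}$ is a rational function of the entries of $M$ whose denominator $\det M$ does not vanish), the map $(A,B)\mapsto S(A,B)^{-1}$ is continuous on $\calN_{A_0}\times \calN_{B_0}$. Consequently,
\begin{equation*}
(A,B,C)\;\longmapsto\; \mathrm{vec}^{-1}\!\bigl(S(A,B)^{-1}\,\mathrm{vec}(C)\bigr)\;=\;X(A,B,C)
\end{equation*}
is continuous as a composition of continuous maps (linear in $C$, and rational in $(A,B)$). The only real subtlety is the continuity-of-roots argument in the first part; the rest is routine finite-dimensional linear algebra, and I do not anticipate any serious obstacle.
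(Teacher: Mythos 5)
Your proposal is correct and follows essentially the same route as the paper: the first part (separating the finite spectra by disjoint open sets and invoking continuity of the roots of the characteristic polynomial) is identical, and the second part is the paper's argument — continuity of $(A,B)\mapsto L_{A,B}$ followed by continuity of inversion on invertible operators — merely made concrete via the $\mathrm{vec}$/Kronecker representation $I_m\otimes A-B^{\top}\otimes I_n$ instead of the paper's abstract operator-norm bound $\|L_{A,B}-L_{A_0,B_0}\|\leq\|A-A_0\|+\|B-B_0\|$. No gaps.
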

 \begin{proof} It is clear that the coefficients of the characteristic polynomial of a matrix depend continuously on the matrix. Also, the roots of a polynomial depend continuously on its coefficients
 (see e.g. \cite{Hir} for a precise statement and a proof). Thus the eigenvalues of a matrix, being the roots of the characteristic polynomial, depend continuously on the matrix. 
 
 As the spectrum $\sigma(M)$ of a matrix $M\in \mC^{n\times n}$ is a finite set comprising at most $n$ distinct complex numbers,  given $A_0\in \mC^{n\times n}$ and $B_0\in \mC^{m\times m}$ such that $\sigma(A_0)\cap \sigma(B_0)=\emptyset$, there exist neighbourhoods $D_{A_0}$ and $D_{B_0}$ of $\sigma(A_0)$, respectively of $\sigma(B_0)$, in $\mC$ 
 such that $D_{A_0}\cap D_{B_0}=\emptyset$ (because  the Euclidean topology of the complex plane is Hausdorff).  By the continuity of eigenvalues mentioned in the first paragraph above, it follows that there is a neighbourhood $\calN_{A_0}$ of $A_0$ and a neighbourhood $\calN_{B_0}$ of $B_0$  such that for all 
 $A,B\in  \calN_{A_0}\times \calN_{B_0}$, we have $\sigma(A)\subset D_{A_0}$ and $\sigma(B)\subset D_{B_0}$, so that in particular, $\sigma(A)\cap \sigma(B)=\emptyset$. 
 
  The map $L_{A,B}\in \calL(\mC^{n\times m},\mC^{n\times m})$, given by 
  $$
  L_{A,B}(X)=AX-XB\textrm{ 
  for all }X\in \mC^{n\times m},
  $$
   depends continuously on $A,B$. Indeed, using the property of the operator norm that 
    $\|PQ\|\leq \|P\|\|Q\|$ (for complex matrices $P,Q$),  we get 
    $$
  \|L_{A,B}-L_{A_0,B_0}\|\leq \|A-A_0\|+\|B-B_0\|.
  $$ 
   We also know that $L$ is invertible whenever $\sigma(A)\cap \sigma(B)=\emptyset$ (from Theorem~\ref{t_0.1}).  Let $GL_{nm}(\mC)$ denote the invertible maps in the set $ \calL(\mC^{n\times m},\mC^{n\times m})$. Since the operation of taking inverse, namely the map $\cdot^{-1}: GL_{nm}(\mC)\rightarrow GL_{nm}(\mC)$, is continuous,  we have that 
$$
 \calN_{A_0}\times \calN_{B_0}\times \mC^{n\times n}\owns (A,B,C)\mapsto X(A,B,C)=(L_{A,B})^{-1} C\in \mC^{n\times m}
 $$
 is a continuous map. 
 \end{proof}

 \subsection{The Implicit Function Theorem in Banach algebras}
 
\noindent  The main tool  we will use  is the following 
Implicit Function Theorem in Banach Algebras (see \cite[p.155]{Hay}). 
This will afford us passage  from continuous functions on $M_\calA$ to
elements of $\calA$.

\begin{proposition}
\label{prop3}
Let $\calA$ be a commutative unital complex semisimple Banach algebra.  
Let $h_1,\cdots, h_s$ be continuous functions on $M_\calA$. Suppose that
$f_1,\cdots f_\ell$ in $\calA$ and $
G_1(z_1,\cdots, z_{s+\ell}),\dots, G_t(z_1,\cdots,z_{s+\ell})
$ are holomorphic functions 
with $t\geq s$ defined on a neighbourhood of the joint spectrum
$$
\begin{array}{lr}
\sigma(h_1,\cdots, h_s, f_1,\cdots, f_\ell)\phantom{AAAA}
\\\phantom{AAAAAA}:=\{ (h_1(\varphi),\cdots,h_s(\varphi), \widehat{f_1}(\varphi),\cdots,
\widehat{f_\ell}(\varphi)):\varphi\in M_\calA\},
\end{array}
$$
such that 
\begin{equation}
\label{eq_hayashi_cont_soln}
G_k(h_1,\cdots, h_s, \widehat{f_1},\cdots,\widehat{f_\ell})=0 
\textrm{ on }M_\calA \textrm{ for }1\leq k\leq t.
\end{equation}
If the rank of the Jacobi matrix 
$$
\displaystyle
\frac{\partial(G_1,\cdots, G_t)}{\partial(z_1,\cdots, z_s)}
$$ 
is $s$ on
$\sigma(h_1,\cdots, h_s, f_1,\cdots, f_\ell)$, then there exist elements
$g_1,\cdots, g_s$ in $\calA$ such that
$$
\widehat{g_1}=h_1,\;\cdots,\;\widehat{g_s}=h_s .
$$
\end{proposition}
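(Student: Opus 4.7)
The plan is to derive the theorem from the Arens--Shilov--Calder\'on holomorphic functional calculus for several commuting elements in a commutative Banach algebra, combined with the classical holomorphic implicit function theorem in $\mC^{s+\ell}$.

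First I would apply the holomorphic implicit function theorem locally on the joint spectrum $K:=\sigma(h_1,\ldots,h_s,f_1,\ldots,f_\ell)\subset\mC^{s+\ell}$. At each $w_0\in K$, the rank-$s$ Jacobian hypothesis lets me pick indices $k_1<\cdots<k_s$ so that the $s\times s$ submatrix $\partial(G_{k_1},\ldots,G_{k_s})/\partial(z_1,\ldots,z_s)$ is invertible at $w_0$. The classical implicit function theorem then yields a polydisk neighbourhood $U_{w_0}\times V_{w_0}\subset\mC^s\times\mC^\ell$ of $w_0$ and a holomorphic map $\Psi_{w_0}:V_{w_0}\to U_{w_0}$ whose graph is the unique solution inside $U_{w_0}\times V_{w_0}$ to $G_{k_1}=\cdots=G_{k_s}=0$. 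By the vanishing assumption (\ref{eq_hayashi_cont_soln}), for every $\varphi\in M_\calA$ with the point $(h_1(\varphi),\ldots,h_s(\varphi),\widehat{f}_1(\varphi),\ldots,\widehat{f}_\ell(\varphi))$ lying in $U_{w_0}\times V_{w_0}$, the local uniqueness forces $h_j(\varphi)=\Psi_{w_0,j}(\widehat{f}_1(\varphi),\ldots,\widehat{f}_\ell(\varphi))$. Compactness of $M_\calA$ makes $K$ compact, so finitely many such polydisks cover $K$.

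The core step is to assemble these local lifts into a single holomorphic tuple $F=(F_1,\ldots,F_s)$ defined on a neighbourhood $\Omega$ of the reduced joint spectrum $\sigma(\widehat{f}_1,\ldots,\widehat{f}_\ell)\subset\mC^\ell$, such that $F_j(\widehat{f}_1(\varphi),\ldots,\widehat{f}_\ell(\varphi))=h_j(\varphi)$ for every $\varphi\in M_\calA$. I would build $F_j$ via a Cauchy-type residue integral in the variables $(z_1,\ldots,z_s)$, schematically
\[
F_j(w'')=\frac{1}{(2\pi i)^s}\oint_{\Gamma(w'')} z_j\,\frac{\det\!\bigl(\partial(G_{k_1},\ldots,G_{k_s})/\partial(z_1,\ldots,z_s)\bigr)\,dz_1\wedge\cdots\wedge dz_s}{G_{k_1}(z,w'')\cdots G_{k_s}(z,w'')},
\]
where $\Gamma(w'')$ is a compact cycle in $\mC^s$ enclosing the distinguished local branches $\Psi_{w_0}(w'')$ above $w''$ and varying holomorphically in $w''$. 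A partition of unity on the finite subcover from the previous step would be used to glue the local cycles into a single global $\Gamma$. The subtle point, and the \emph{main obstacle} of the proof, is to guarantee that $\Gamma(w'')$ encloses only those zeros of $G_{k_1}=\cdots=G_{k_s}=0$ which correspond to images of points of $M_\calA$, while separating them from any extraneous joint zeros of the $G_k$'s off $K$; handling such spurious zeros may well require an Arens--Calder\'on-style enlargement of $\calA$ by auxiliary generators in order to shrink the joint spectrum enough to isolate the relevant branches.

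Once $F$ is constructed on $\Omega$, the Arens--Shilov--Calder\'on holomorphic functional calculus for commutative unital Banach algebras immediately produces elements $g_j:=F_j(f_1,\ldots,f_\ell)\in\calA$ whose Gelfand transforms satisfy $\widehat{g_j}(\varphi)=F_j(\widehat{f}_1(\varphi),\ldots,\widehat{f}_\ell(\varphi))=h_j(\varphi)$ for every $\varphi\in M_\calA$, and semisimplicity of $\calA$ ensures that these $g_j$ are uniquely determined by their Gelfand transforms.
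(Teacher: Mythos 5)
First, a point of reference: the paper does not prove Proposition~\ref{prop3} at all --- it is quoted verbatim from Hayashi \cite[p.~155]{Hay} and used as a black box. So there is no in-paper proof to compare against, and your outline has to stand on its own; as it stands it has a genuine gap at its central step. You propose to build a single holomorphic map $F=(F_1,\dots,F_s)$ on a neighbourhood of the \emph{reduced} joint spectrum $\sigma(\widehat{f}_1,\dots,\widehat{f}_\ell)\subset\mC^\ell$ with $F_j(\widehat{f}_1(\varphi),\dots,\widehat{f}_\ell(\varphi))=h_j(\varphi)$ for all $\varphi$. This intermediate goal is false in general, because $h_j$ need not factor through $\varphi\mapsto(\widehat{f}_1(\varphi),\dots,\widehat{f}_\ell(\varphi))$. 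Concretely, take $\calA=\mC^2=C(\{\varphi_1,\varphi_2\})$, $\ell=1$, $f_1=0$, $s=t=1$, $G(z_1,z_2)=z_1^2-1$, and $h(\varphi_1)=1$, $h(\varphi_2)=-1$. All hypotheses of the proposition hold (the Jacobian $2z_1=\pm 2$ is nonzero on the joint spectrum $\{(1,0),(-1,0)\}$, and the conclusion is true with $g=(1,-1)$), yet $\sigma(\widehat{f}_1)=\{0\}$ and no function $F$ of the $f$-variable alone can satisfy both $F(0)=1$ and $F(0)=-1$. Your residue integral cannot rescue this: any admissible cycle over $w''=0$ either encloses both branches $z_1=\pm1$ (and then returns their sum, not the value of $h$ at a given $\varphi$) or excludes one branch that genuinely comes from $M_\calA$. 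Note that this is a different, and more basic, obstruction than the one you flag: the problem is not only ``extraneous zeros off $K$'' but distinct points of $M_\calA$ with the same $f$-image and different $h$-values, i.e.\ several legitimate branches over the same base point.

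Consequently the Arens--Calder\'on enlargement you mention as something that ``may well be required'' is not an optional refinement but the essential content: one must adjoin finitely many further elements of $\calA$ whose Gelfand transforms separate the relevant branches (possible because the transforms of all of $\calA$ separate points of $M_\calA$, together with a compactness argument), and then carry out the whole construction over $\mC^{\ell+N}$ on a polynomial polyhedron, where Oka--Weil/Cousin-type extension arguments replace your gluing step. Relatedly, ``glue the local cycles with a partition of unity'' is not a valid move as stated: a partition of unity destroys holomorphy of the resulting integral in $w''$, and making the locally defined implicit branches into a single global holomorphic object is precisely the hard part that the several-variable machinery (or Hayashi's argument) is there to handle. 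Two minor points: semisimplicity is irrelevant to the existence statement you are proving (it enters only for uniqueness elsewhere in the paper), and your first, local step (implicit function theorem plus local uniqueness forcing $h=\Psi_{w_0}\circ\widehat{f}$ on each product neighbourhood) is fine.
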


\goodbreak 
 
 \section{Proof of the main result}
 \label{section_proof}
 
\begin{proof}(Of Theorem~\ref{c_0.1}).  The condition 
 $\sigma(\widehat{A}(\varphi))\cap \sigma(\widehat{B}(\varphi))=\emptyset$ 
  for all $\varphi \in M_{\calA}$,  
implies (by Proposition~\ref{t_0.1}) the existence of a pointwise solution $F$, 
$M_\calA \owns \varphi \mapsto F(\varphi)\in \mC^{n\times m}$, satisfying 
$$
\widehat{A}(\varphi) F(\varphi) -F(\varphi) \widehat{B}(\varphi)=\widehat{C}(\varphi) 
\textrm{ for all }\varphi \in M_{\calA}. \quad \quad (\star)
$$
We want to produce an $X\in \calA^{n\times m}$ such that $\widehat{X}=F$. We note that in this case,  as $\widehat{X}\in C(M_{\calA})^{n\times m}$, we should have $F$ depend continuously on $\varphi$. Corollary~\ref{corollary_15_july_2021_12:41} implies for any $\varphi_0\in M_\calA$, 
there exists a neighbourhood $U\subset M_\calA$ of $\varphi_0$ such that (the unique pointwise solution) $F|_{U}\in C(U)^{n\times m}$. It follows that $F\in C(M_\calA)^{n\times m}$. 
  
  Now we will prove that $\widehat{X}=F$ by using the Banach algebra Implicit Function Theorem, namely Proposition~\ref{prop3}.  In our case, 
$s=nm$, $t=nm$, the $h_i$ ($1\leq i\leq nm$)  are the $nm$ component functions of $F$, and the $f_i$ ($1\leq i\leq \ell=n^2+m^2+nm$)  comprise the components of $A,B, C$ (which
are totally $\ell=n^2+m^2+nm$ in number).  The maps
$G_1,\dots G_{t=nm}$ are the $nm$ components of the map
$$
\mC^{n\times n}\times 
\mC^{m\times m}\times
\mC^{n\times m}\times
\mC^{n\times m}\owns (\balpha,\bbeta,\bgamma,\bxi)\mapsto \balpha \bxi-\bxi \bbeta -\bgamma\in \mC^{n\times m}.
$$
(In the above, we have the replacements of $A,B,C$ by the
complex variables which are the components of $\balpha,\bbeta,\bgamma$, respectively.
The replacement of the $X$ in the Sylvester equation is by the complex
variables which are the components of $\bxi$.)  Clearly, the above
map is holomorphic not just on a neighbourhood of the joint spectrum, but in fact in the
whole of $\mC^{s+\ell}=\mC^{nm+ n^2+m^2+nm}$. Also, the condition \eqref{eq_hayashi_cont_soln} in
Proposition~\ref{prop3} is satisfied, because  $F\in C(M_\calA)^{n\times m}$ satisfies 
 ($\star$) above. 
 
 So we now investigate the Jacobian with respect to the variables in
$\bxi$. The Jacobian with respect to the $\bxi$ variables at the
point
$$
(\!F(\varphi), \;\widehat{A}(\varphi), \;\widehat{B}(\varphi),\;
\widehat{C}(\varphi))\in \sigma(h_1,\cdots, h_{nm}, f_1,\cdots, f_{n^2+m^2+nm})
$$
is the  linear transformation $\bxi \stackrel{\Lambda}{\mapsto} \widehat{A}(\varphi)\;\!\bxi -\bxi \;\!\widehat{B}(\varphi):\mC^{n\times m}\rightarrow \mC^{n\times m}$. This map $\Lambda$ is invertible, thanks to the condition 
 $
\sigma(\widehat{A}(\varphi))\cap \sigma(\widehat{B}(\varphi))=\emptyset.
$ 
 So the rank of the Jacobian with respect to the variables in
$\bxi$ is $nm=s$ on the joint spectrum. Hence $F=\widehat{X}$ for some $X\in \calA^{n\times m}$. 

\smallskip 

\noindent {\em Uniqueness}: Suppose $X,Y$ are two solutions such that $X\neq Y$. As the Banach algebra $\calA$ is semisimple, there exists a $\varphi_0\in M_\calA$ such that $\widehat{X}(\varphi_0)\neq \widehat{Y}(\varphi_0)$. But then we get two solutions $\widehat{X}(\varphi_0)\neq \widehat{Y}(\varphi_0)\in \mC^{n\times n}$ to the Sylvester equation 
$$
\widehat{A}(\varphi_0) \;\!\bxi - \bxi\;\! \widehat{B}(\varphi_0)=\widehat{C}(\varphi_0)
$$
despite $\widehat{A}(\varphi_0) \cap \widehat{B}(\varphi_0)=\emptyset$, contradicting Proposition~\ref{t_0.1}. 
\end{proof}

\goodbreak 

 \section{$\textrm{Application:  Roth's removal rule}$} 
 \label{section_application}
 
 \noindent The following result was proved in \cite{Rot}.
 
 \begin{proposition}
 \label{prop_17_jul_2021_14:07}
 Let $A\in \mC^{n\times n},$ $B\in \mC^{m\times m}$ and $C\in \mC^{n\times m}$. Then  
 $$
 \left[\begin{array}{cc} A& 0\\
 0& B\end{array}\right]
 \textrm{ and }
 \left[\begin{array}{cc} A& C\\
 0& B\end{array}\right] 
 $$ 
 in $\mC^{(n+m)\times (n+m)}$ are similar if and only if there exists an $X\in \mC^{n\times m}$ such that $AX-XB=C$. 
 \end{proposition}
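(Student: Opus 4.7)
My plan is to prove the biconditional by treating the two directions separately, with the $(\Rightarrow)$ direction carrying the real content. The $(\Leftarrow)$ direction is an explicit block conjugation. Given $X\in\mC^{n\times m}$ with $AX-XB=C$, set
$$
P \;:=\; \begin{bmatrix} I_n & X \\ 0 & I_m \end{bmatrix}, \qquad P^{-1} \;=\; \begin{bmatrix} I_n & -X \\ 0 & I_m \end{bmatrix},
$$
and observe by direct block multiplication that
$$
P^{-1} \begin{bmatrix} A & 0 \\ 0 & B \end{bmatrix} P \;=\; \begin{bmatrix} A & AX-XB \\ 0 & B \end{bmatrix} \;=\; \begin{bmatrix} A & C \\ 0 & B \end{bmatrix},
$$
witnessing the similarity.

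For the $(\Rightarrow)$ direction my plan is to pass to $\mC[\lambda]$-modules. Any matrix $M\in\mC^{k\times k}$ makes $\mC^k$ into a finitely generated torsion $\mC[\lambda]$-module $V_M$ by letting $\lambda$ act as $M$, and two complex matrices are similar iff the associated $\mC[\lambda]$-modules are isomorphic. Under this dictionary, $M_0:=\begin{bmatrix} A & 0 \\ 0 & B \end{bmatrix}$ corresponds canonically to the direct sum $V_A\oplus V_B$, while $N_0:=\begin{bmatrix} A & C \\ 0 & B \end{bmatrix}$ yields a module $V_{N_0}$ fitting into the natural short exact sequence
$$
0 \;\longrightarrow\; V_A \;\longrightarrow\; V_{N_0} \;\longrightarrow\; V_B \;\longrightarrow\; 0,
$$
with $V_A$ embedded as the first $n$ coordinates. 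A $\mC[\lambda]$-linear section $s\colon V_B\to V_{N_0}$ must take the form $v\mapsto(Xv,v)^{\top}$ for some $\mC$-linear $X\colon\mC^m\to\mC^n$, and $\mC[\lambda]$-linearity of $s$ translates into $AX+C=XB$, i.e.\ $AY-YB=C$ with $Y:=-X$. Thus the task reduces to showing that the short exact sequence above splits whenever $V_{N_0}\cong V_A\oplus V_B$ as $\mC[\lambda]$-modules.

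The main obstacle is this final splitting claim, which is the non-trivial content of Roth's theorem. My plan is to exploit the structure theorem and Krull--Schmidt for finitely generated modules over the PID $\mC[\lambda]$. Decomposing $V_A$ and $V_B$ into primary cyclic summands $\mC[\lambda]/(\lambda-a)^e$, the group $\mathrm{Ext}^1(V_B,V_A)$ splits as a direct sum of local contributions at each $a\in\mC$; by the standard $\mathrm{Ext}$ computation these vanish except when $a\in\sigma(A)\cap\sigma(B)$. For each primary pair $\bigl(\mC[\lambda]/(\lambda-a)^f,\,\mC[\lambda]/(\lambda-a)^e\bigr)$ sharing an eigenvalue, a direct calculation of the middle module's elementary divisors shows that any nonzero extension class strictly changes those divisors away from $\{(\lambda-a)^e,(\lambda-a)^f\}$; therefore the hypothesis that $V_{N_0}$ and $V_A\oplus V_B$ have identical elementary divisors forces every such extension class to vanish. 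This combinatorial analysis over the shared spectrum is precisely what Proposition~\ref{t_0.1} sidesteps when $\sigma(A)\cap\sigma(B)=\emptyset$; the novelty of Roth's rule is that no such disjointness hypothesis is assumed.
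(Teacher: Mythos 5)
Your $(\Leftarrow)$ direction is correct and is exactly the computation the paper itself carries out (for arbitrary commutative rings) in the discussion following the proposition; for the $(\Rightarrow)$ direction the paper gives no proof at all, citing \cite{Rot} and \cite{Gus}, so here you are supplying an argument the paper omits. Your reduction is sound up to the last step: similarity is isomorphism of $\mC[\lambda]$-modules, $V_{N_0}$ sits in $0\to V_A\to V_{N_0}\to V_B\to 0$, and a $\mC[\lambda]$-linear section is precisely a solution of the Sylvester equation. Everything therefore hinges on the claim that $V_{N_0}\cong V_A\oplus V_B$ forces the sequence to split.

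That is where there is a genuine gap. The class of the extension is an element of $\mathrm{Ext}^1(V_B,V_A)\cong\bigoplus_{i,j}\mathrm{Ext}^1(C_j,D_i)$, i.e.\ a whole matrix $(\epsilon_{ij})$ of primary-pair classes, and the middle module of the extension with class $(\epsilon_{ij})$ is \emph{not} the direct sum of the middle modules of the individual $\epsilon_{ij}$ (the latter would have length $\sum_{i,j}\bigl(\ell(C_j)+\ell(D_i)\bigr)$ rather than $\ell(V_A)+\ell(V_B)$). So your correct observation that a single nonzero class on a cyclic pair $\bigl(\mC[\lambda]/(\lambda-a)^f,\,\mC[\lambda]/(\lambda-a)^e\bigr)$ perturbs the elementary divisors does not transfer to the matrix case: when several $\epsilon_{ij}$ are nonzero, the invariant factors of $V_{N_0}$ depend jointly on all the entries, and the inference ``identical elementary divisors, therefore every $\epsilon_{ij}$ vanishes'' is a non sequitur; to make it work you would first have to normalize $(\epsilon_{ij})$ under $\mathrm{Aut}(V_A)\times\mathrm{Aut}(V_B)$, which is essentially the whole difficulty of Roth's theorem. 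The clean way to close the gap keeps your setup but replaces the elementary-divisor count by a $\mathrm{Hom}$ count: for any finite-dimensional $\mC[\lambda]$-module $T$, left exactness of $\mathrm{Hom}(-,T)$ gives $\dim\mathrm{Hom}(V_{N_0},T)\le\dim\mathrm{Hom}(V_A,T)+\dim\mathrm{Hom}(V_B,T)$, with equality if and only if the restriction map $\mathrm{Hom}(V_{N_0},T)\to\mathrm{Hom}(V_A,T)$ is onto; the hypothesis $V_{N_0}\cong V_A\oplus V_B$ forces equality for every $T$, and taking $T=V_A$ lifts $\mathrm{id}_{V_A}$ to a retraction $V_{N_0}\to V_A$, which splits the sequence.
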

 
 \noindent For an arbitrary unital commutative ring $R$, we note that if 
 $A\in R^{n\times n}$, $B\in R^{m\times m}$, $C\in R^{n\times m}$, and there 
exists an $X\in R^{n\times m}$ such that $AX-XB=C$, then setting 
$$
S:=\left[\begin{array}{cc} I_n & X\\
 0& I_m\end{array}\right]\in R^{(n+m)\times (n+m)},
 $$
 we have 
 $$
 S^{-1}:=\left[\begin{array}{cc} I_n & -X\\
 0& I_m\end{array}\right]\in R^{(n+m)\times (n+m)},
 $$
 and so  
 \begin{eqnarray*}
S\left[\begin{array}{cc} A& C\\
 0& B\end{array}\right] S^{-1}
 \!\!\!\!&=&\!\!\!\!
 \left[\begin{array}{cc} I_n & X\\
 0& I_m\end{array}\right]\left[\begin{array}{cc} A& C\\
 0& B\end{array}\right] \left[\begin{array}{cc} I_n & -X\\
 0& I_m\end{array}\right]
 \\
 \!\!\!\!&=&\!\!\!\!
 \left[\begin{array}{cc} A & C+XB\\
 0& B\end{array}\right]
 \left[\begin{array}{cc} I_n & -X\\
 0& I_m\end{array}\right]
 \\
 \!\!\!\!&=&\!\!\!\!
 \left[\begin{array}{cc} A & C+XB-AX\\
 0& B\end{array}\right]\\
 \!\!\!\!&=&\!\!\!\!  \left[\begin{array}{cc} A & C-C\\
 0& B\end{array}\right]
 \\
 \!\!\!\!&=&\!\!\!\!\left[\begin{array}{cc} A & 0\\
 0& B\end{array}\right].
 \end{eqnarray*}
In fact the converse is also true, and Proposition~\ref{prop_17_jul_2021_14:07} can be generalised to the case of arbitrary rings \cite{Gus}.
 
 \begin{proposition}
 \label{prop_gustafsson}
 Let $R$ be a commutative unital ring. 
 Let $A\in R^{n\times n},$ $B\in R^{m\times m}$ and $C\in R^{n\times m}$. The matrices 
 $$
 \left[\begin{array}{cc} A& 0\\
 0& B\end{array}\right]
 \textrm{ and }
 \left[\begin{array}{cc} A& C\\
 0& B\end{array}\right] 
 $$ 
 in $R^{(n+m)\times (n+m)}$ are similar if and only if there exists an $X\in R^{n\times m}$ such that $AX-XB=C$. 
 \end{proposition}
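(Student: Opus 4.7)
The plan is to treat each direction separately. The $(\Leftarrow)$ direction has already been established by the computation displayed immediately before the statement: given $X \in R^{n\times m}$ with $AX-XB=C$, the block matrix $S = \left[\begin{array}{cc} I_n & X \\ 0 & I_m\end{array}\right]$ is invertible over $R$ and conjugates $\left[\begin{array}{cc} A & C\\ 0 & B\end{array}\right]$ to $\left[\begin{array}{cc} A & 0\\ 0 & B\end{array}\right]$. That calculation uses only ring-theoretic block multiplication and so is valid over any commutative unital ring.

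For the $(\Rightarrow)$ direction, I would begin from the hypothesis $SM_2 = M_1 S$, where $M_1,M_2$ denote the block diagonal and block upper triangular matrices and $S \in R^{(n+m)\times(n+m)}$ is invertible. Writing $S = \left[\begin{array}{cc} S_1 & S_2 \\ S_3 & S_4\end{array}\right]$ in compatible block form and equating the four blocks of $SM_2$ and $M_1 S$ yields the identities
$$
S_1 A = A S_1, \quad A S_2 - S_2 B = S_1 C, \quad S_3 A = B S_3, \quad B S_4 - S_4 B = S_3 C.
$$
If $S_1$ happens to be invertible in $R^{n\times n}$, the proof is immediate: since $S_1$ commutes with $A$, so does $S_1^{-1}$, and $X := S_1^{-1} S_2$ satisfies
$$
AX - XB \;=\; S_1^{-1}(A S_2 - S_2 B) \;=\; S_1^{-1}(S_1 C) \;=\; C.
$$

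The main obstacle is that $S_1$ need not be invertible in general, and naive attempts to modify $S$ by elements of the centralizers of $M_1$ or $M_2$ (which preserve the similarity) do not evidently force the top-left block to become a unit. To handle the general case I would follow Gustafson \cite{Gus}, whose argument is naturally phrased module-theoretically: the matrices $M_1, M_2$ endow $R^{n+m}$ with two $R[t]$-module structures, the first of which splits as $N_A \oplus N_B$, where $N_A$ is $R^n$ with $t$ acting by $A$ and $N_B$ is $R^m$ with $t$ acting by $B$, while the second fits into a short exact sequence $0 \to N_A \to Q \to N_B \to 0$ of $R[t]$-modules. Existence of the required $X$ is equivalent to this sequence being split, and the similarity hypothesis says precisely that $Q \cong N_A \oplus N_B$ as $R[t]$-modules. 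The essential content of \cite{Gus} is that this abstract middle-module isomorphism forces the sequence to split; this is the step I would import from that paper.
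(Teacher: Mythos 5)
Your proposal matches the paper's treatment: the paper proves the $(\Leftarrow)$ direction by the same explicit conjugation with $S=\left[\begin{smallmatrix} I_n & X\\ 0 & I_m\end{smallmatrix}\right]$ and, exactly as you do, defers the converse to Gustafson \cite{Gus}. Your extra block-by-block analysis and the module-theoretic reformulation of Gustafson's argument are accurate, but the substantive content of the hard direction is imported from \cite{Gus} in both cases, so the approaches are essentially the same.
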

 
\noindent We have the following consequence of our main result.

 \begin{corollary}
 Let $\calA$ be a commutative unital complex semisimple Banach algebra. 
 Let $A\in \calA^{n\times n},$ $B\in \calA^{m\times m},$  $C\in \calA^{n\times m}$. 
 Then the following are equivalent:
 
 \noindent {\em (1)} For every $C\in \calA^{n\times m},$ the matrices 
 $$
 \left[\begin{array}{cc} A& 0\\
 0& B\end{array}\right]
 \textrm{ and }
 \left[\begin{array}{cc} A& C\\
 0& B\end{array}\right] 
 $$ 
\phantom{(1) }in $\calA^{(n+m)\times (n+m)}$ are similar.
 
  \noindent {\em (2)}  For every $C\in \calA^{n\times m},$ there exists a unique $X\in \calA^{n\times m}$ such that \phantom{(1) }$AX-XB=C$.

 \noindent {\em (3)}  For all $\varphi \in M_{\calA},$ $ \sigma(\widehat{A}(\varphi))\cap \sigma(\widehat{B}(\varphi))=\emptyset. $
\end{corollary}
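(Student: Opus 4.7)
My plan is to close a three-step cycle of implications. Two of the steps are essentially free from material already in the paper: (2)$\Leftrightarrow$(3) is precisely Theorem~\ref{c_0.1}, and (2)$\Rightarrow$(1) is the explicit block-matrix conjugation carried out in the paragraph immediately preceding Proposition~\ref{prop_gustafsson}, which works verbatim over any commutative unital ring and so in particular over~$\calA$. The only substantive implication remaining is therefore (1)$\Rightarrow$(3), which I would handle by reducing pointwise to the scalar case via the Gelfand transform.

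For that reduction, assume (1) and fix an arbitrary $\varphi_0 \in M_{\calA}$. The goal is to show that the finite-dimensional scalar Sylvester operator
$$
\Lambda_{\varphi_0}:\mC^{n\times m}\rightarrow \mC^{n\times m},\qquad \Lambda_{\varphi_0}(Y)=\widehat{A}(\varphi_0)\,Y-Y\,\widehat{B}(\varphi_0),
$$
is invertible. Given any $D\in\mC^{n\times m}$, form $C\in\calA^{n\times m}$ by multiplying each scalar entry of $D$ by the identity $1_{\calA}$; then $\widehat{C}(\varphi_0)=D$. Gustafsson's general Roth removal rule (Proposition~\ref{prop_gustafsson}), applied to the commutative unital ring~$\calA$, translates the similarity supplied by (1) into the existence of some $X\in\calA^{n\times m}$ with $AX-XB=C$. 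Applying the entry-wise Gelfand transform at $\varphi_0$ yields $\Lambda_{\varphi_0}(\widehat{X}(\varphi_0))=D$, and since $D$ was arbitrary, $\Lambda_{\varphi_0}$ is surjective. Finite-dimensionality of $\mC^{n\times m}$ then forces $\Lambda_{\varphi_0}$ to be invertible, and Proposition~\ref{t_0.1} concludes $\sigma(\widehat{A}(\varphi_0))\cap\sigma(\widehat{B}(\varphi_0))=\emptyset$. As $\varphi_0\in M_{\calA}$ was arbitrary, this is (3).

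The main conceptual obstacle is not computational but quantifier-tracking: (1) together with Proposition~\ref{prop_gustafsson} only provides existence of some $X$ over $\calA$, not uniqueness, so one cannot directly compare with (2). The rescue is to exploit the full force of the quantifier ``for every $C$'' in (1) so that $\widehat{C}(\varphi_0)$ can be any prescribed scalar matrix $D$, turning existence over $\calA$ into pointwise surjectivity in $\mC^{n\times m}$. Once that is in place, the finite-dimensional equivalence of surjectivity and invertibility, combined with the classical Proposition~\ref{t_0.1}, closes the cycle without any further spectral input.
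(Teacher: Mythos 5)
Your proposal is correct and takes essentially the same route as the paper: the cycle (3)$\Rightarrow$(2) via Theorem~\ref{c_0.1}, (2)$\Rightarrow$(1) via the explicit conjugation by $\left[\begin{smallmatrix} I_n & X\\ 0 & I_m\end{smallmatrix}\right]$, and (1)$\Rightarrow$(3) by using Proposition~\ref{prop_gustafsson} to extract a solution $X$ over $\calA$ for $C=D\,1_{\calA}$, evaluating the Gelfand transform at $\varphi_0$, and deducing surjectivity (hence invertibility) of the scalar Sylvester operator. The only cosmetic slip is calling (2)$\Leftrightarrow$(3) ``precisely'' Theorem~\ref{c_0.1}, which states only (3)$\Rightarrow$(2); this is harmless since your cycle supplies the converse.
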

\begin{proof}  $\;$

\noindent (3)$\Rightarrow$(2) follows from Theorem~\ref{c_0.1}. 

\smallskip 

\noindent (2)$\Rightarrow$(1) follows from Proposition~\ref{prop_gustafsson}. 

\smallskip 

\noindent (1)$\Rightarrow$(3): From Proposition~\ref{prop_gustafsson}, it follows that  for every $C\in \calA^{n\times m}$, there exists an $X\in \calA^{n\times m}$ such that $AX-XB=C$. 
 Take any matrix $C_0\in \mC^{n\times m}$, and set $C=C_0 e$, where $e\in \calA$ is the unit element of the Banach algebra $\calA$. Then $\widehat{C}=C_0 {\mathbf{1}}_{M_\calA}$, where 
 ${\mathbf{1}}_{M_\calA}\in C(M_\calA)$ is the function identically equal to $1$ on $M_\calA$. Let  $\varphi \in M_\calA$. Then the matrix $X_0:=\widehat{X}(\varphi)$ satisfies 
 $$
 \widehat{A}(\varphi)X_0-X_0 \widehat{B}(\varphi)=C_0 .
 $$
 As $C_0\in  \mC^{n\times m}$ was arbitrary, 
  the map 
  $$
  L_{ \widehat{A}(\varphi), \widehat{B}(\varphi)}:  \mC^{n\times m}\rightarrow  \mC^{n\times m}, \quad 
   \mC^{n\times m}\owns Y\mapsto  \widehat{A}(\varphi)Y-Y\widehat{B}(\varphi)\in \mC^{n\times m}
  $$ 
  is surjective, and hence invertible. Hence
   $
  \sigma(\widehat{A}(\varphi))\cap \sigma(\widehat{B}(\varphi))=\emptyset.
  $ Also, since $\varphi\in M_\calA$ was arbitrary, we conclude that (3) holds. 
\end{proof}
 
 \noindent A repeated application of the previous result gives the following. 
 
 \begin{corollary}
 Let $\calA$ be a commutative unital complex semisimple Banach algebra. 
 Suppose $A_{ii}\in \calA^{d_i\times d_i},$ $d_i\in \mN,$ $i\in \{1,\cdots ,n\},$  
 satisfy 
 $$
 \forall \varphi \in M_\calA,\; 
 \sigma(\widehat{A}_{ii}(\varphi))\cap  
 \sigma(\widehat{A}_{jj}(\varphi))=\emptyset, \textrm{ for } 1\leq i<j\leq n.
 $$
 For $1\leq i<j\leq n$, let $A_{ij}\in\calA^{d_i\times d_j}$. Then 
 the matrices 
  $$
  \left[
    \begin{array}{cccc}
    A_{11}    & A_{12}       & \cdots    & A_{1n}    \\ \cline{1-1}
    \bord & A_{22}       & \cdots    & A_{2n}     \\ \cline{2-2}
          & \bigzero   & \ddots    &     \\ 
          &   & \bord & A_{nn}     \\ \cline{4-4}
  \end{array}\right]
 \textrm{ and }
  \left[\begin{array}{ccc}
  A_{11} &  & \Zero   \\
    & \ddots &    \\
 \Zerob  & & A_{nn} \end{array}\right]
 $$
 are similar in $\calA^{D\times D}$, where $D=d_1+\cdots +d_n$.
 \end{corollary}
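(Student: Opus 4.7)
The plan is to induct on $n$, using the previous corollary as the inductive engine. The base case $n=1$ is trivial (both matrices coincide), and $n=2$ is precisely the content of the previous corollary. For the inductive step, assume the result for $n-1$, and write the given block upper triangular matrix in the form
$$
M \;=\; \left[\begin{array}{cc} T & U \\ 0 & A_{nn} \end{array}\right]\in \calA^{D\times D},
$$
where $T \in \calA^{(D-d_n)\times (D-d_n)}$ is the upper left $(n-1)\times (n-1)$ block upper triangular matrix with diagonal blocks $A_{11},\ldots,A_{n-1,n-1}$ and the given off-diagonal blocks $A_{ij}$ for $1\leq i<j\leq n-1$, and $U \in \calA^{(D-d_n)\times d_n}$ stacks $A_{1n},\ldots,A_{n-1,n}$ vertically.

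The first task is to verify that $(T, A_{nn})$ satisfies the hypothesis of the previous corollary. For each $\varphi \in M_\calA$, the Gelfand transform $\widehat{T}(\varphi)\in \mC^{(D-d_n)\times (D-d_n)}$ is a block upper triangular complex matrix with diagonal blocks $\widehat{A}_{11}(\varphi),\ldots,\widehat{A}_{n-1,n-1}(\varphi)$. Expanding its characteristic polynomial along the block structure yields
$$
\det\bigl(\lambda I - \widehat{T}(\varphi)\bigr) \;=\; \prod_{i=1}^{n-1} \det\bigl(\lambda I - \widehat{A}_{ii}(\varphi)\bigr),
$$
so $\sigma(\widehat{T}(\varphi)) = \bigcup_{i=1}^{n-1} \sigma(\widehat{A}_{ii}(\varphi))$. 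The standing hypothesis then gives $\sigma(\widehat{T}(\varphi)) \cap \sigma(\widehat{A}_{nn}(\varphi)) = \emptyset$ for every $\varphi \in M_\calA$, as required.

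Invoking the previous corollary with $T$, $A_{nn}$, and $U$ in place of $A$, $B$, and $C$, one concludes that $M$ is similar in $\calA^{D\times D}$ to $\mathrm{diag}(T, A_{nn})$. The inductive hypothesis, applied to $T$ (whose diagonal blocks inherit pairwise disjoint spectra from the hypothesis for $1\leq i<j\leq n-1$), produces an invertible $S \in \calA^{(D-d_n)\times(D-d_n)}$ with $STS^{-1} = \mathrm{diag}(A_{11},\ldots,A_{n-1,n-1})$. Conjugating by the invertible block diagonal matrix $\mathrm{diag}(S, I_{d_n})\in \calA^{D\times D}$ (with inverse $\mathrm{diag}(S^{-1}, I_{d_n})$) sends $\mathrm{diag}(T, A_{nn})$ to $\mathrm{diag}(A_{11},\ldots,A_{nn})$, and composition of similarities completes the induction.

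There is no genuine obstacle here: the only substantive ingredient beyond bookkeeping is the classical fact that the spectrum of a complex block upper triangular matrix is the union of the spectra of its diagonal blocks, used to transfer the pairwise hypothesis into the hypothesis of the previous corollary at each peeling step.
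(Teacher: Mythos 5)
Your proof is correct and matches the paper's intent: the paper simply states that the result follows by ``a repeated application of the previous result,'' and your induction --- peeling off the last diagonal block, verifying the spectral hypothesis via $\sigma(\widehat{T}(\varphi))=\bigcup_{i=1}^{n-1}\sigma(\widehat{A}_{ii}(\varphi))$ for the block upper triangular matrix $\widehat{T}(\varphi)$, and then conjugating by $\mathrm{diag}(S,I_{d_n})$ --- is exactly the omitted bookkeeping.
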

%
%
%
%
%
%
%
%
%


\begin{thebibliography}{99}

\bibitem{BR}
R. Bhatia and P. Rosenthal. 
How and why to solve the operator equation $AX-XB=Y$. 
{\em The Bulletin of the London Mathematical Society}, 
29:1-21, no. 1, 1997.

  
\bibitem{Djo}
B.D. Djordjevi\'{c}. 
Singular Sylvester equation in Banach spaces and its
              applications: Fredholm theory approach. 
   {\em Linear Algebra and its Applications},
    622:189-214, 2021. 
    
\bibitem{Djo2}
B.D. Djordjevi\'{c} and N. Din\v{c}i\'{c}. 
Classification and approximation of solutions to Sylvester
              matrix equation. 
   {\em Filomat, Univerzitet u Ni\v{s}u. Prirodno-Matemati\v{c}ki Fakultet}, 33:4261-4280, no. 13, 2019.

\bibitem{Dra}
M. Drazin. 
On a result of J.J. Sylvester. 
{\em Linear Algebra and its Applications}, 505:361-366, 2016.

  
\bibitem{Gam}
T. Gamelin. 
{\em Uniform algebras}. 
Prentice-Hall, 1969.

\bibitem{Gus}
 W.  Gustafson. 
 Roth's theorems over commutative rings. 
 {\em Linear Algebra and its Applications},
   23:245-251, 1979.
   
\bibitem{Hay} 
M. Hayashi. 
Implicit function theorem for Banach algebras. 
{\em Journal of the London Mathematical Society. Second Series},
 13:155-161, no. 1, 1976.
      
\bibitem{Hir}
K. Hirose. 
Continuity of the roots of a polynomial.  
{\em American Mathematical Monthly},  
127:359-363, no. 4, 2020.

\bibitem{Mul}
V. M\"uller.
{\em Spectral theory of linear operators and spectral systems in
              Banach algebras}. Second edition. 
    Operator Theory: Advances and Applications, vol. 139, 
    Birkh\"{a}user Verlag, 2007.

\bibitem{Ros}
M. Rosenblum.  
On the operator equation $BX-XA=Q$. 
   {\em Duke Mathematical Journal},
   23:263-269, 1956.
        
\bibitem{Rot}
W. Roth. 
The equations $AX-YB=C$ and $AX-XB=C$ in matrices. 
{\em Proceedings of the American Mathematical Society}, 
3:392-396, 1952.   

\bibitem{Rud}
W. Rudin. 
{\em Functional analysis}. 
Second edition. International Series in Pure and Applied Mathematics, 
McGraw-Hill, 1991.

\bibitem{Syl}
J.J. Sylvester. 
Sur l'equation en matrices $px = xq$. 
{\em C. R. Acad. Sci. Paris}, 99:67-71 and 115-116, 1884.

\end{thebibliography}
\end{document}